\DeclareMathOperator{\Con}{Con}
\DeclareMathOperator{\BCon}{\mathbf{Con}}
\newtheorem{theorem}{Theorem}
\newtheorem{lemma}[theorem]{Lemma}
\newtheorem{proposition}[theorem]{Proposition}
\newtheorem{remark}[theorem]{Remark}
\newtheorem{example}[theorem]{Example}
\newtheorem{corollary}[theorem]{Corollary}
\title{The variety of complemented lattices where the Sasaki operations form an adjoint pair}
\author{V\'aclav~Cenker, Ivan~Chajda and Helmut~L\"anger}
\date{}
\begin{document}

\footnotetext{Support of the research of the first two authors by the Czech Science Foundation (GA\v CR), project 24-14386L, entitled ``Representation of algebraic semantics for substructural logics'', and by IGA, project P\v rF~2024~011, is gratefully acknowledged.}
	
\maketitle
	
\begin{abstract}
The Sasaki projection was introduced as a mapping from the lattice of closed subspaces of a Hilbert space onto one of its segments. To use this projection and its dual so-called Sasaki operations were introduced by the second two authors in \cite{CL17} and \cite{CLa}. In \cite{CLa} there are described several classes of lattices, $\lambda$-lattices and semirings where the Sasaki operations form an adjoint pair. In the present paper we prove that the class of complemented lattices with this property forms a variety and we explicitly state its defining identities. Moreover, we prove that this variety $\mathcal V$ is congruence permutable and regular. Hence every ideal $I$ of some member $\mathbf L$ of $\mathcal V$ is a kernel of some congruence on $\mathbf L$. Finally, we determine a finite basis of so-called ideal terms and describe the congruence $\Theta_I$ determined by the ideal $I$.
\end{abstract}
	
{\bf AMS Subject Classification:} 06C15, 06B05, 06C20
	
{\bf Keywords:} Complemented lattice, Sasaki projection, Sasaki operations, orthomodular lattice, variety, congruence permutability, regularity, ideal, congruence kernel, ideal term, basis of ideal terms

It was proved by G.~Birkhoff and J.~von~Neumann \cite{BV} in 1936, and independently by K.~Husimi \cite H in 1937 that the lattice $\mathcal L(\mathbf H)$ of closed subspaces of a Hilbert space $\mathbf H$ is orthomodular. For a given element $a\in\mathcal L(\mathbf H)$, U.~Sasaki \cite S introduced the projection $p_a$ of $\mathcal L(\mathbf H)$ onto its interval $[0,a]$ as follows:
\[
p_a(x)=(x\vee a')\wedge a.
\]
The name {\em Sasaki projection} was given later by M.~Nakamura \cite N. Also, we can formulate its dual, i.e.\ the projection
\[
\overline{p_a}(x)=(x\wedge a)\vee a'
\]
of $\mathcal L(\mathbf H)$ onto its interval $[a',1]$.

However, we can extend these definitions by introducing the corresponding concepts in every bounded lattice $(L,\vee,\wedge, {}',0,1)$ equipped with a unary operation $'$. Using this framework, we define two binary operations on $L$.

Let $\mathbf L=(L,\vee,\wedge,{}',0,1)$ be a bounded lattice with unary operation $'$. By the {\em Sasaki operations} on $\mathbf L$ we mean the following term operations:
\[
x\odot y=(x\vee y')\wedge y\quad\text{and}\quad x\to y=x'\vee(x\wedge y).
\]
We say that they form an {\em adjoint pair} on $\mathbf L$ if the following holds:
\begin{enumerate}
\item[(A)] $x\odot y\le z$ if and only if $x\le y\to z$.
\end{enumerate}

\begin{lemma}
Let $\mathbf L=(L,\vee,\wedge,{}',0,1)$ be a bounded lattice with unary operation $'$. If the Sasaki operations $\odot$, $\to$ form an adjoint pair in $\mathbf L$, then the unary operation $'$ is a complementation.
\end{lemma}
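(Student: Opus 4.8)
The plan is to recover the two defining equalities of a complementation, namely $x\vee x'=1$ and $x\wedge x'=0$, by feeding the extremal values $z=1$ and $z=0$ into the adjunction (A) and exploiting that the right-hand operation $\to$ degenerates drastically at these arguments. The whole point is that no structural hypothesis beyond (A) is needed; each equality will drop out of a single well-chosen substitution.

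First I would treat the join law. Putting $z=1$ in (A), the inequality $x\odot y\le 1$ holds for every $x$, so the right-hand side $x\le y\to 1$ must hold for every $x$ as well; taking $x=1$ forces $y\to 1=1$. Since $y\to 1=y'\vee(y\wedge 1)=y'\vee y$ by the definition of $\to$, this yields $y\vee y'=1$ for every $y\in L$.

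Next the meet law. Putting $z=0$ in (A) and noting that $y\to 0=y'\vee(y\wedge 0)=y'$, the adjunction collapses to the equivalence ``$x\odot y=0$ if and only if $x\le y'$'' (here $x\odot y\le 0$ is the same as $x\odot y=0$ because $0$ is the least element). The key move is to substitute $x=y'$: the right-hand side $y'\le y'$ is trivially true, so the left-hand side gives $y'\odot y=0$, that is, $(y'\vee y')\wedge y=y'\wedge y=0$. Hence $y\wedge y'=0$ for every $y\in L$.

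Combining the two conclusions shows that $x'$ is a complement of $x$ for every $x\in L$, so $'$ is indeed a complementation. I do not anticipate a genuine obstacle: the argument rests entirely on the observation that $\to$ simplifies to $y'\vee y$ at the top and to $y'$ at the bottom, after which the adjoint condition does all the work, with the only nonobvious ingredient being the choice $x=y'$ in the bottom case.
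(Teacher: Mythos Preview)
Your proof is correct and follows essentially the same approach as the paper: both derive $y\vee y'=1$ from the instance $z=1$ of (A) and $y\wedge y'=0$ from the instance $z=0$. The only cosmetic difference is in the meet law, where the paper substitutes $x=0$ (using $0\le a\to 0$ to conclude $0\odot a=(0\vee a')\wedge a=a'\wedge a\le 0$) while you substitute $x=y'$; both choices yield the desired equality in one line.
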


\begin{proof}
Because $1\odot a=a\le1$ for each $a\in L$, by (A), we derive $1\le a\to1=a'\vee(a\wedge1)=a'\vee a$. Since $0\le a\to0$, we also have $a'\wedge a=(0\vee a')\wedge a=0\odot a\le0$, i.e.\ $a'\wedge a=0$. Thus, $'$ is a complementation.
\end{proof}

Hence, investigating whether the Sasaki operations in lattices with a unary operation form an adjoint pair makes sense only for lattices with complementation. At first we determine the class of such lattices.

\begin{theorem}\label{th1}
Let $\mathbf L=(L,\vee,\wedge,{}',0,1)$ be a bounded lattice with complementation. Then the following are equivalent:
\begin{enumerate}
\item[{\rm(a)}] $x\odot y\le z$ implies $x\le y\to z$ for all $x,y,z\in L$,
\item[{\rm(b)}] $\mathbf L$ satisfies the identity $x\vee y'\approx y'\vee\big((x\vee y')\wedge y\big)$,
\item[{\rm(c)}] $x'\le y$ implies $y=x'\vee(y\wedge x)$.
\end{enumerate}
Furthermore, the following are equivalent:
\begin{enumerate}
\item[{\rm(d)}] $x\le y\to z$ implies $x\odot y\le z$ for all $x,y,z\in L$,
\item[{\rm(e)}] $\mathbf L$ satisfies the identity $x\wedge y\approx x\wedge\big((x\wedge y)\vee x'\big)$,
\item[{\rm(f)}] $x\le y$ implies $x=(y'\vee x)\wedge y$.
\end{enumerate}
\end{theorem}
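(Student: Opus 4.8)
The plan is to establish the two blocks of equivalences separately, treating (a)$\iff$(b)$\iff$(c) first and then (d)$\iff$(e)$\iff$(f) by an entirely parallel argument. Within each block I would make the middle identity ((b), respectively (e)) the \emph{hub}, proving it equivalent in turn to the order-theoretic condition above it and to the conditional statement below it. The whole difficulty is concentrated in choosing the right instantiations; every inequality needed afterwards is forced by monotonicity of $\vee$ and $\wedge$ together with the absorption-type facts $y'\le x\vee y'$ and $x\wedge y\le x$, and by the complementation laws $x\wedge x'=0$, $x\vee x'=1$ supplied by the preceding lemma.

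For the first block I would prove (a)$\Rightarrow$(b) by feeding the implication its tightest instance: put $z:=x\odot y=(x\vee y')\wedge y$, so that the hypothesis $x\odot y\le z$ holds trivially; then (a) yields $x\le y\to z=y'\vee\big((x\vee y')\wedge y\big)$, after simplifying $y\wedge((x\vee y')\wedge y)=(x\vee y')\wedge y$. Joining with $y'$ and using the obvious reverse inequality $y'\vee((x\vee y')\wedge y)\le x\vee y'$ gives (b). For (b)$\Rightarrow$(a) I start from $x\odot y\le z$, note $(x\vee y')\wedge y\le y\wedge z$, and substitute into (b) to climb $x\le x\vee y'=y'\vee((x\vee y')\wedge y)\le y'\vee(y\wedge z)=y\to z$. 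The equivalence (b)$\iff$(c) is then purely formal: reading (c) at the pair $(y,\,x\vee y')$ has an automatically true premise $y'\le x\vee y'$ and its conclusion is exactly (b), while specializing (b) at $(y,x)$ and absorbing $y\vee x'=y$ under the hypothesis $x'\le y$ returns (c).

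The second block I would handle by the mirror-image choices. For (d)$\Rightarrow$(e) I instantiate (d) at $x:=x\wedge y$, $y:=x$, $z:=y$; the premise $x\wedge y\le x\to y$ is automatic, and the conclusion $\big((x\wedge y)\vee x'\big)\wedge x\le y$ is, after intersecting with $x$, precisely the nontrivial half of (e), the other half being the trivial $x\wedge y\le x\wedge((x\wedge y)\vee x')$. For (e)$\Rightarrow$(d) I run the monotonicity climb downward: from $x\le y\to z$ I obtain $(x\vee y')\wedge y\le\big((y\wedge z)\vee y'\big)\wedge y$, and (e) read at $(y,z)$ bounds the right-hand side by $z$. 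Finally (e)$\iff$(f) is again formal: (f) at $(x\wedge y,\,x)$ has automatic premise $x\wedge y\le x$ and delivers (e), whereas (e) at $(y,x)$ under $x\le y$ collapses $y\wedge x$ to $x$ and reproduces (f).

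The main obstacle I anticipate is not any single calculation but spotting the correct substitutions, in particular recognizing that the conditional statements (c) and (f) are just the identities (b) and (e) read off at instances where the side condition is vacuously satisfied, and that the implications out of the adjunction halves (a) and (d) turn into identities exactly when one plugs in the extremal witness $z=x\odot y$, respectively $x=x\wedge y$. Once these instantiations are fixed I expect each verification to collapse to a two- or three-line chain of lattice inequalities with no appeal to distributivity and no property of $'$ beyond complementation, so the only real risk is an index-juggling slip in the choice of witnesses.
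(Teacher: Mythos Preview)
Your proposal is correct and follows essentially the same strategy as the paper: reduce everything to well-chosen instantiations and then push through with monotonicity and the complementation laws. The only cosmetic difference is that the paper runs the cycles (a)$\Rightarrow$(c)$\Rightarrow$(b)$\Rightarrow$(a) and (d)$\Rightarrow$(f)$\Rightarrow$(e)$\Rightarrow$(d) rather than using (b) and (e) as hubs, but the individual steps (in particular (b)$\Rightarrow$(a), (c)$\Rightarrow$(b), (e)$\Rightarrow$(d), (f)$\Rightarrow$(e)) are verbatim yours, and the remaining links differ only in which witness is plugged in first.
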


\begin{proof}
$\text{}$ \\
(a) $\Rightarrow$ (c): \\
Assume $x'\le y$ for $x,y\in L$. Then, $y\odot x=(y\vee x')\wedge x=y\wedge x$. According to (a) we obtain
\[
y\le x\to(y\wedge x)=x'\vee(y\wedge x)\le y\vee(y\wedge x)=y
\]
and hence $y=x'\vee(y\wedge x)$. \\
(c) $\Rightarrow$ (b): \\
Because $y'\le x\vee y'$, we apply (c) to get $x\vee y'=y'\vee\big((x\vee y')\wedge y)$. \\
(b) $\Rightarrow$ (a): \\
Assume $x\odot y\le z$. Using (b) we derive 
\begin{align*}
x & \le x\vee y'=y'\vee\big((x\vee y')\wedge y\big)=y'\vee\Big(\big((x\vee y')\wedge y\big)\wedge y\Big)=y'\vee\big((x\odot y)\wedge y\big)\le \\
  & \le y'\vee(z\wedge y)=y\to z.
\end{align*}
Similarly, we prove the remaining. \\
(d) $\Rightarrow$ (f): \\
Let $x\le y$. Then $y\to x=y'\vee(y\wedge x)=y'\vee x$. By (d) we have $(y'\vee x)\odot y\le x$ whence
\[
(y'\vee x)\wedge y=\big((y'\vee x)\big)\vee y')\wedge y=(y'\vee x)\odot y\le x.
\]
Conversely, from the assumption and fact that $x\le y'\vee x$ we conclude $x\le(y'\vee x)\wedge y $. Consequently, $x=(y'\vee x)\wedge y$. \\
(f) $\Rightarrow$ (e): \\
Because $x\wedge y\le x$, we apply (f) to get $x\wedge y=x\wedge\big((x\wedge y)\vee x'\big)$. \\
(e) $\Rightarrow$ (d): \\
Assume $x\le y\to z$. Then, using (e) we obtain
\begin{align*}
x\odot y & =(x\vee y')\wedge y\le\big((y \to z)\vee y'\big)\wedge y=\Big(\big(y'\vee(y\wedge z)\big)\vee y'\Big)\wedge y= \\
         & =\big(y'\vee(y\wedge z)\big)\wedge y=y\wedge z\le z.
\end{align*}
\end{proof}

\begin{corollary}
The class of bounded lattices with a unary operation $\,'$ where the Sasaki operations form an adjoint pair is a variety determined by the identities of complemented lattices and the identities {\rm(b)} and {\rm(c)} from Theorem~\ref{th1}. Let us denote this variety by $\mathcal V$.
\end{corollary}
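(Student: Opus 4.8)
The plan is to read the corollary off directly from the earlier Lemma and from Theorem~\ref{th1}, since the substantive work has already been carried out in those two results. By definition a bounded lattice with unary operation $'$ belongs to the class under consideration precisely when the biconditional (A) holds, and (A) is exactly the conjunction of the two implications (a) and (d) of Theorem~\ref{th1}. My first step is to invoke the Lemma: as soon as (A) holds the operation $'$ is a complementation, so the class in question is contained in the bounded lattices with complementation. Conversely, complementation is itself captured equationally, by $x\vee x'\approx1$ and $x\wedge x'\approx0$ on top of the bounded-lattice axioms, so ``bounded complemented lattice'' is already a variety and I may freely adjoin its defining identities to my list.

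The second step uses Theorem~\ref{th1} to trade the quantified implications defining the adjunction for genuine identities. Over complemented lattices, each of the two directions (a) and (d) is, by the equivalences proved in Theorem~\ref{th1}, equivalent to an identity; adjoining these identities — recorded in the statement of the corollary as (b) and (c) — to the bounded-lattice and complementation axioms exhibits the class as the model class of a fixed set of identities. The decisive point here is the \emph{equational collapse}: although condition (c) is phrased as an implication, Theorem~\ref{th1} shows it to be equivalent to the honest identity (b), so the subclass it determines is equational rather than merely quasi-equational. Thus no quasi-identity survives in the final axiomatisation, and the membership condition, which at first sight is a quantified implication, reduces to satisfaction of identities.

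The conclusion is then immediate from Birkhoff's HSP theorem: a class of algebras of a fixed type axiomatised by identities is closed under homomorphic images, subalgebras, and direct products, hence is a variety, which I denote $\mathcal V$. I expect the only genuinely delicate point to be this equational reformulation — the observation that a superficially quasi-equational membership requirement is in fact equivalent to a set of identities — but this is precisely the content of the equivalences in Theorem~\ref{th1}. At the level of the corollary the remaining task is therefore purely organisational: collecting the bounded-lattice axioms, the two complementation identities, and the identities (b) and (c) of Theorem~\ref{th1} into a single defining set and appealing to Birkhoff.
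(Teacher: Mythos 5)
Your global strategy --- the Lemma to force complementation, Theorem~\ref{th1} to trade the two directions of the adjunction for identities, Birkhoff to conclude --- is exactly the argument the paper leaves implicit (the corollary carries no separate proof there). The gap is in your middle paragraph, where you misidentify which conditions of Theorem~\ref{th1} are the identities. The theorem consists of two \emph{independent} blocks: (a) $\Leftrightarrow$ (b) $\Leftrightarrow$ (c), in which only (b) is an identity and (c) is an implication, and (d) $\Leftrightarrow$ (e) $\Leftrightarrow$ (f), in which only (e) is an identity and (f) is an implication. Your remark that ``condition (c) \dots is equivalent to the honest identity (b)'' is true, but it undercuts your axiomatisation rather than completing it: since (b) and (c) both live in the first block, the list ``complemented-lattice identities plus (b) and (c)'' is redundant and encodes only the direction (a); the identity encoding the converse direction (d) is (e), and it is genuinely independent of (b). Concretely, take the hexagon $O_6$ with $0<a<b<1$, $0<c<d<1$, the chains incomparable, and complementation $0'=1$, $1'=0$, $a'=b'=d$, $c'=d'=b$; a finite check shows (b) holds throughout, while (e) fails at $x=b$, $y=a$, since
\[
b\wedge a=a\quad\text{but}\quad b\wedge\big((b\wedge a)\vee b'\big)=b\wedge(a\vee d)=b\wedge1=b.
\]
So the class your axioms literally define is strictly larger than the class where the Sasaki operations form an adjoint pair.

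In fairness, the slip originates in the paper: the corollary as printed also says ``(b) and (c)'', which must be a typo for ``(b) and (e)'', since (c) is not an identity and cannot occur in an equational base. A correct write-up notes this and runs: bounded complemented lattices are equationally axiomatised (lattice identities, $x\vee0\approx x$, $x\wedge1\approx x$, $x\vee x'\approx1$, $x\wedge x'\approx0$); any algebra in which (A) holds is complemented by the Lemma and satisfies (b) and (e) by the implications (a) $\Rightarrow$ (b) and (d) $\Rightarrow$ (e); conversely, (b) $\Rightarrow$ (a) and (e) $\Rightarrow$ (d) recover both halves of (A); Birkhoff then yields the variety $\mathcal V$. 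Your proposal needs only this one-line repair --- replace ``(c)'' by ``(e)'' and delete the ``equational collapse'' paragraph, whose collapse goes the wrong way --- but as written it asserts an axiom set that does not define $\mathcal V$.
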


\begin{remark}
It is evident that the aforementioned variety $\mathcal V$ includes the variety of orthomodular lattices, but does not coincide with it. For instance, the modular lattice $\mathbf M_3$ depicted in Figure~1

\vspace*{-8mm}

\begin{center}
\setlength{\unitlength}{7mm}
\begin{picture}(6,6)
\put(3,1){\circle*{.3}}
\put(1,3){\circle*{.3}}
\put(3,3){\circle*{.3}}
\put(5,3){\circle*{.3}}
\put(3,5){\circle*{.3}}
\put(3,1){\line(-1,1)2}
\put(3,1){\line(0,1)4}
\put(3,1){\line(1,1)2}
\put(3,5){\line(-1,-1)2}
\put(3,5){\line(1,-1)2}
\put(2.85,.3){$0$}
\put(.35,2.85){$a$}
\put(3.4,2.85){$b$}
\put(5.4,2.85){$c$}
\put(2.85,5.4){$1$}
\put(2.2,-.75){{\rm Fig.~1}}
\put(.4,-1.75){{\rm Modular lattice $\mathbf M_3$}}
\end{picture}
\end{center}

\vspace*{10mm}

with complementation defined by the table
\[
\begin{array}{l|ccccc}
x  & 0 & a & b & c & 1 \\
\hline
x' & 1 & b & c & a & 0
\end{array}
\]
belongs to $\mathcal V$ but it is not an orthomodular lattice since the complementation is not an involution since $(a')'=b'=c\ne a$. However, the complementation of $\mathbf M_3$ is antitone. Moreover, $\mathcal V$ contains also non-modular lattices since not every orthomodular lattice is modular. Further, $\mathcal V$ includes the variety of complemented modular lattices since within these lattices we have
\begin{align*}
   y'\vee\big(y\wedge(x\vee y')\big) & \approx(y'\vee y)\wedge(x\vee y')\approx x\vee y', \\
\big((x\wedge y)\vee x'\big)\wedge x & \approx(x\wedge y)\vee(x'\wedge x)\approx x\wedge y.
\end{align*}
Finally, $\mathcal V$ is contained in the variety of dually weakly orthomodular lattices, and every member of $\mathcal V$ whose complementation is both surjective and an involution is a weakly orthomodular lattice. For more details on {\rm(}dually{\rm)} weakly orthomodular lattices cf.\ {\rm\cite{CL18}}.
\end{remark}

\begin{example}\label{ex1}
The modular lattice visualized in Figure~2

\vspace*{-2mm}

\begin{center}
\setlength{\unitlength}{7mm}
\begin{picture}(12,8)
\put(3,1){\circle*{.3}}
\put(1,3){\circle*{.3}}
\put(3,3){\circle*{.3}}
\put(5,3){\circle*{.3}}
\put(9,3){\circle*{.3}}
\put(3,5){\circle*{.3}}
\put(7,5){\circle*{.3}}
\put(9,5){\circle*{.3}}
\put(11,5){\circle*{.3}}
\put(9,7){\circle*{.3}}
\put(3,1){\line(-1,1)2}
\put(3,1){\line(0,1)4}
\put(3,1){\line(1,1)2}
\put(3,1){\line(3,1)6}
\put(1,3){\line(1,1)2}
\put(1,3){\line(3,1)6}
\put(3,3){\line(3,1)6}
\put(5,3){\line(-1,1)2}
\put(5,3){\line(3,1)6}
\put(9,3){\line(-1,1)2}
\put(9,3){\line(0,1)4}
\put(9,3){\line(1,1)2}
\put(3,5){\line(3,1)6}
\put(7,5){\line(1,1)2}
\put(11,5){\line(-1,1)2}
\put(2.85,.3){$0$}
\put(.35,2.85){$a$}
\put(2.35,2.85){$b$}
\put(4.35,2.85){$c$}
\put(8.85,2.3){$d$}
\put(2.85,5.4){$e$}
\put(7.4,4.85){$f$}
\put(9.4,4.85){$g$}
\put(11.4,4.85){$h$}
\put(8.85,7.4){$1$}
\put(5.3,-.75){{\rm Fig.~2}}
\put(4.4,-1.75){{\rm Member of $\mathcal V$}}
\end{picture}
\end{center}

\vspace*{10mm}

with complementation defined by the table
\[
\begin{array}{l|cccccccccc}
x  & 0 & a & b & c & d & e & f & g & h & 1 \\
\hline
x' & 1 & g & h & f & e & d & c & a & b & 0
\end{array}
\]
is a member of $\mathcal V$. Observe that the complementation of this lattice is an involution but not antitone since $a\le f$, but $f'=c\not\le g=a'$.
\end{example}

In what follows we describe some important congruence properties of the variety $\mathcal V$.

Recall the following facts concerning congruence properties.

Let $\mathbf A=(A,F)$ be an algebra and $\BCon\mathbf A=(\Con\mathbf A,\subseteq)$ denote its congruence lattice. Then $\mathbf A$ is called
\begin{enumerate}[(i)]
\item {\em congruence permutable} if $\Theta\circ\Phi=\Phi\circ\Theta$ for all $\Theta,\Phi\in\Con\mathbf A$,
\item {\em congruence distributive} if $\BCon\mathbf A$ is distributive,
\item {\em arithmetical} if it is both congruence permutable and congruence distributive,
\item {\em regular} if every congruence on $\mathbf A$ is determined by each of its classes, i.e.\ if $a\in A$, $\Theta,\Phi\in\Con\mathbf A$ and $[a]\Theta=[a]\Phi$ together imply $\Theta=\Phi$,	
\end{enumerate}
see e.g.\ \cite{CEL} and \cite C. A variety is said to have one of the above congruence properties if every of its members has this property. The following results are well-known:
\begin{enumerate}[(i)]
\item A variety is congruence permutable if and only if there exists a so-called Mal'cev term, i.e.\ a ternary term $p$ satisfying the identities $p(x,x,z)\approx z$ and $p(x,z,z)\approx x$,
\item if in a variety there exists a so-called majority term, i.e.\ a ternary term $m$ satisfying the identities $m(x,x,z)\approx m(x,y,x)\approx x$ and $m(x,z,z)\approx z$ then this variety is congruence distributive,
\item a variety is regular if and only if there exists some positive integer $n$ and ternary terms $t_1,\ldots,t_n$ such that $t_1(x,y,z)=\cdots=t_n(x,y,z)=z$ if and only if $x=y$.
\end{enumerate}

\begin{theorem}\label{th2}
The variety $\mathcal V$ is arithmetical and regular.
\end{theorem}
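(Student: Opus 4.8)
The statement has three parts: congruence distributivity and congruence permutability (together giving arithmeticity), and regularity. The plan is to treat them separately, since for a variety of lattices the first is automatic, the second carries the real work, and the third turns out to follow cleanly from the adjointness alone.

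Congruence distributivity comes for free. Every member of $\mathcal V$ is in particular a lattice, and the median $m(x,y,z)=(x\wedge y)\vee(y\wedge z)\vee(z\wedge x)$ is a term in the signature satisfying $m(x,x,z)\approx m(x,z,x)\approx m(z,x,x)\approx x$; these identities hold in every lattice with no use of complementation or of (b), (e). By the criterion (ii) (a majority term forces congruence distributivity), $\mathcal V$ is congruence distributive.

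For congruence permutability I would exhibit a Mal'cev term, i.e.\ a ternary term $p$ with $p(x,x,z)\approx z$ and $p(x,z,z)\approx x$; by the criterion (i) this is equivalent to permutability, and together with the previous paragraph it yields that $\mathcal V$ is arithmetical. The candidate $p$ is to be assembled from $\vee$, $\wedge$ and ${}'$ (equivalently from the Sasaki operations $\odot$, $\to$), and the two Mal'cev identities are then verified by rewriting each side and reducing, via the defining identities (b) and (e) of $\mathcal V$ together with the complementation laws $a\vee a'\approx 1$ and $a\wedge a'\approx 0$, to a trivial identity. I expect this to be the main obstacle: because a member of $\mathcal V$ need be neither distributive (it may contain copies of $\mathbf M_3$) nor have ${}'$ involutive, the usual Mal'cev polynomial of Boolean algebras is unavailable, so (b) and (e) must enter in an essential way to make the relative complements behave; the whole difficulty of the theorem is concentrated in choosing $p$ so that exactly these two identities close the computation.

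Regularity I would handle directly through the adjointness, independently of $p$. First record the order test $a\to b=1$ iff $a\le b$: applying (A) with $x:=1$ and using $1\odot a=(1\vee a')\wedge a=a$ gives $a\le b$ iff $1\le a\to b$ iff $a\to b=1$. Hence for the biimplication term $b(x,y):=(x\to y)\wedge(y\to x)$ one has $b(x,y)=1$ iff $x\le y$ and $y\le x$, i.e.\ iff $x=y$; note also $b(x,x)=x\to x=x'\vee x=1$. Now set
\[
t_1(x,y,z):=b(x,y)\wedge z, \qquad t_2(x,y,z):=\big(b(x,y)\big)'\vee z.
\]
If $x=y$ then $b(x,y)=1$, so $t_1(x,y,z)=z$ and $t_2(x,y,z)=0\vee z=z$; conversely, $t_1(x,y,z)=z$ forces $z\le b(x,y)$ and $t_2(x,y,z)=z$ forces $b(x,y)'\le z$, whence $b(x,y)'\le b(x,y)$ and therefore $b(x,y)=b(x,y)\vee b(x,y)'=1$, i.e.\ $x=y$. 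Thus $t_1(x,y,z)=t_2(x,y,z)=z$ if and only if $x=y$, and by the criterion (iii) $\mathcal V$ is regular. The only points needing care here are that the order test genuinely uses both directions of (A) and that $c'\le c\Rightarrow c=1$ holds in any complemented lattice, neither of which requires distributivity or involutivity.
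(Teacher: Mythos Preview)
Your treatment of congruence distributivity is fine (the dual majority term works just as well as the one the paper writes), and your regularity argument is correct. It is close in spirit to the paper's: both build $t_1(x,y,z)=u\wedge z$ and $t_2(x,y,z)=u'\vee z$ from a ``biimplication'' $u$ that equals $1$ exactly when $x=y$. The paper takes $u:=(x\wedge y)\vee(x\vee y)'$ and deduces $x=y$ from $u=1$ via condition~(f) of Theorem~\ref{th1}; you take $u:=b(x,y)=(x\to y)\wedge(y\to x)$ and deduce $x=y$ from $u=1$ via the adjointness~(A). Both are valid, and your version has the mild advantage of reading directly off the residuation.

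The genuine gap is congruence permutability. You correctly identify it as the heart of the theorem, but you stop at ``I would exhibit a Mal'cev term'' without exhibiting one; a strategy description is not a proof here, precisely because (as you note) the Boolean candidate fails and a term tailored to (b)/(e) is required. The paper supplies
\[
p(x,y,z):=\Big(x\wedge\big((z\wedge y)\vee y'\big)\Big)\vee\Big(z\wedge\big((x\wedge y)\vee y'\big)\Big),
\]
and the verification is a two-line computation using only condition~(e) (equivalently~(f)) of Theorem~\ref{th1}: substituting $y:=x$ collapses the second summand to $z$ via $z\wedge 1$ and the first to $z\wedge x$ via~(e), giving $p(x,x,z)\approx(z\wedge x)\vee z\approx z$; symmetrically $p(x,z,z)\approx x\vee(x\wedge z)\approx x$. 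Note that only the ``(d)-side'' identity~(e) is needed, not~(b); your plan anticipated needing both. Once you have this $p$, your outline becomes a complete proof.
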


\begin{proof}
Since $\mathcal V$ is a variety of lattices, it is congruence distributive (namely, it contains the majority term $m(x,y,z):=(x\vee y)\wedge(y\vee z)\wedge(z\vee x)$). We prove congruence permutability of $\mathcal V$. Consider the term
\[
p(x,y,z):=\Big(x\wedge\big((z\wedge y)\vee y'\big)\Big)\vee\Big(z\wedge\big((x\wedge y)\vee y'\big)\Big).
\]
Using (f) of Theorem~\ref{th1} we compute
\begin{align*}
p(x,x,z) & \approx\Big(x\wedge\big((z\wedge x)\vee x'\big)\Big)\vee\Big(z\wedge\big((x\wedge x)\vee x'\big)\Big)\approx(z\wedge x)\vee z\approx z, \\
p(x,z,z) & \approx\Big(x\wedge\big((z\wedge z)\vee z'\big)\Big)\vee\Big(z\wedge\big((x\wedge z)\vee z'\big)\Big)\approx x\vee(x\wedge z)\approx x.
\end{align*}
Hence $p$ is a Mal'cev term. Thus $\mathcal V$ is congruence permutable and therefore arithmetical. In order to prove regularity of $\mathcal V$ take $n:=2$ and consider the ternary terms
\begin{align*}
t_1(x,y,z) & :=u\wedge z, \\
t_2(x,y,z) & :=u'\vee z
\end{align*}
where $u:=(x\wedge y)\vee(x\vee y)'$. Evidently, $t_1(x,y,z)=t_2(x,y,z)=z$ provided $x=y$. Conversely assume $t_1(x,y,z)=t_2(x,y,z)=z$. Then 
\[
u=u\vee(u\wedge z)=u\vee t_1(x,y,z)=u\vee t_2(x,y,z)=u\vee(u'\vee z)=1.
\]
Since $x\wedge y\le x\vee y$, we use (f) of Theorem~\ref{th1} to get
\[
x\wedge y=(x\vee y)\wedge\big((x\wedge y)\vee(x\vee y)'\big)=(x\vee y)\wedge u=(x\vee y)\wedge1=x\vee y
\]
which together with $x\wedge y\le x,y\le x\vee y$ yields $x=y$.
\end{proof}

Using advantage of congruence properties of the variety $\mathcal V$, we go on with a finer analysis of congruences and their classes.

Recall the following definitions.

Let $\mathbf L=(L,\vee,\wedge,{}',0,1)$ be some member of $\mathcal V$ and $I\subseteq L$. An {\em ideal term in $y_1,\ldots,y_m$} is a term $t(x_1,\ldots,x_n,y_1,\ldots,y_m)$ satisfying the identity
\[
t(x_1,\ldots,x_n,1,\ldots,1)\approx1
\]
The subset $I$ of $L$ is called {\em closed under the ideal term $t(x_1,\ldots,x_n,y_1,\ldots,y_m)$ in $y_1,\ldots,y_m$} if
\[
t(a_1,\ldots,a_n,b_1,\ldots,b_m)\in I\text{ for all }a_1,\ldots,a_n\in L\text{ and all }b_1,\ldots,b_m\in I.
\]
Moreover, $I$ is called an {\em ideal} of $\mathbf L$ if it is closed under all ideal terms. It is well-known (cf.\ Lemma~10.1.3 and Theorem~10.1.10 of \cite{CEL}) that in a congruence permutable variety ideals and congruence kernels coincide. A set $T$ of ideal terms is called a {\em basis of ideal terms} if a subset of the base set of a lattice $\mathbf L$ belonging to $\mathcal V$ is an ideal of $\mathbf L$ if and only if it is closed under the ideal terms of $T$.

As stated in Theorem~\ref{th2}, the variety $\mathcal V$ is regular and hence also weakly regular, i.e.\ for each lattice $\mathbf L$ belonging in $\mathcal V$, every congruence $\Theta$ on $\mathbf L$ is determined by its class $[1]\Theta$, i.e.\ by its ideal. Our next task is to determine these ideals by using so-called ideal terms. In order to be able to describe a finite basis of ideal terms of the variety $\mathcal V$ we apply Theorem~10.3.4 from \cite{CEL}. But first we need Theorem~6.4.11 from \cite{CEL}.

\begin{proposition}\label{prop1}
{\rm(\cite{CEL}}, Theorem~6.4.11{\rm)} For a variety $\mathcal V$ with $1$ the following conditions are equivalent:
\begin{enumerate}[{\rm(i)}]
\item $\mathcal V$ is permutable and weakly regular.
\item There exists a positive integer $n$, binary terms $t_1(x,y),\ldots,t_n(x,y)$ and an $(n+2)$-ary term $t(x_1,\ldots,x_{n+2})$ satisfying the following identities:
\begin{align*}
& t_1(x,x)\approx\cdots\approx t_n(x,x)\approx1, \\
& t\big(x,y,t_1(x,y),\ldots,t_n(x,y)\big)\approx x, \\
& t(x,y,1,\ldots,1)\approx y.
\end{align*}
\end{enumerate}	
\end{proposition}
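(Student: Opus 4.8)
The statement is a Mal'cev-type characterization, so the plan is to prove the two implications separately, treating (ii)$\Rightarrow$(i) as the routine constructive direction and carrying out the harder (i)$\Rightarrow$(ii) in the free algebra $\mathbf F:=\mathbf F_{\mathcal V}(x,y)$ on two generators. For (ii)$\Rightarrow$(i) I would first verify that $p(x,y,z):=t\big(x,z,t_1(x,y),\ldots,t_n(x,y)\big)$ is a Mal'cev term, which yields permutability: setting $y:=x$ turns each $t_i(x,y)$ into $t_i(x,x)=1$, so $p(x,x,z)=t(x,z,1,\ldots,1)=z$, while $p(x,z,z)=t(x,z,t_1(x,z),\ldots,t_n(x,z))=x$ directly. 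For weak regularity I would argue with $1$-classes: if $\Theta,\Phi$ are congruences of some member of $\mathcal V$ with $[1]\Theta=[1]\Phi$ and $(a,b)\in\Theta$, then compatibility together with $t_i(a,a)=1$ gives $(t_i(a,b),1)\in\Theta$, so $t_i(a,b)\in[1]\Theta=[1]\Phi$; applying $t$ to the resulting pairs $(t_i(a,b),1)\in\Phi$ yields $(a,b)=\big(t(a,b,t_1(a,b),\ldots,t_n(a,b)),\,t(a,b,1,\ldots,1)\big)\in\Phi$, whence $\Theta\subseteq\Phi$, and symmetry gives $\Theta=\Phi$.

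For (i)$\Rightarrow$(ii) I would first manufacture the binary terms. Comparing, in $\mathbf F$, the congruence $\mu$ collapsing $x$ and $y$ with the congruence $\Psi$ generated by all pairs $(s(x,y),1)$ where $s$ ranges over binary terms satisfying $s(x,x)\approx1$, one checks that both have the same $1$-class: indeed $(c(x,y),1)\in\mu$ holds precisely when $c(x,x)\approx1$ (since $\mathbf F/\mu\cong\mathbf F_{\mathcal V}(x)$), which is exactly the condition making $(c(x,y),1)$ a generator of $\Psi$. Weak regularity then forces $\mu=\Psi$, so $(x,y)\in\Psi$, and compactness of the congruence lattice reduces this to finitely many generators, yielding binary terms $t_1,\ldots,t_n$ with $t_i(x,x)\approx1$ such that $(x,y)$ lies in the congruence $\Theta$ on $\mathbf F$ generated by the pairs $(t_i(x,y),1)$.

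It remains to read off the term $t$, and this is where permutability is indispensable. The decisive lemma is a description of finitely generated congruences in a permutable variety: writing $q$ for the Mal'cev term and $\bar a=(a_1,\ldots,a_n)$, $\bar b=(b_1,\ldots,b_n)$, I would show that $R:=\{(g(\bar a),g(\bar b)):g\text{ an }n\text{-ary polynomial of the algebra}\}$ is already the congruence generated by the pairs $(a_i,b_i)$. Reflexivity, compatibility, and containment of the generators are immediate; symmetry follows from the polynomial $\bar w\mapsto q(g(\bar b),g(\bar w),g(\bar a))$, and transitivity (when $g(\bar b)=g'(\bar a)$) from $\bar w\mapsto q(g(\bar w),g'(\bar a),g'(\bar w))$. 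Applying this with $a_i:=t_i(x,y)$ and $b_i:=1$, the membership $(x,y)\in\Theta$ produces an $n$-ary polynomial $g$ of $\mathbf F$ with $x=g(t_1(x,y),\ldots,t_n(x,y))$ and $y=g(1,\ldots,1)$; since the parameters of $g$ are themselves terms in $x,y$, absorbing them into the term symbol gives an $(n+2)$-ary term $t$ with $t(x,y,t_1(x,y),\ldots,t_n(x,y))\approx x$ and $t(x,y,1,\ldots,1)\approx y$, completing (ii). I expect this congruence-generation lemma to be the main obstacle: the symmetry and transitivity verifications are exactly where the Mal'cev identities must be applied correctly, and it is the only point at which permutability, rather than mere weak regularity, is used.
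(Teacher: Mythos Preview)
The paper does not give its own proof of this proposition: it is simply quoted as Theorem~6.4.11 of \cite{CEL}, so there is nothing in the paper to compare your argument against. Your proof is correct and is essentially the standard proof of this Mal'cev condition. Both directions are sound: the Mal'cev term $p(x,y,z)=t\big(x,z,t_1(x,y),\ldots,t_n(x,y)\big)$ and the $1$-class argument for weak regularity handle (ii)$\Rightarrow$(i) cleanly, and for (i)$\Rightarrow$(ii) your free-algebra comparison of $\mu$ and $\Psi$, the compactness reduction to finitely many $t_i$, and the description of $\mathrm{Cg}\big((a_1,b_1),\ldots,(a_n,b_n)\big)$ as $\{(g(\bar a),g(\bar b)):g\text{ an }n\text{-ary polynomial}\}$ are all correct, with the symmetry and transitivity polynomials checking out exactly against the Mal'cev identities. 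The absorption of the polynomial's parameters into the term $t$ is legitimate precisely because you are working in $\mathbf F_{\mathcal V}(x,y)$, where every element is already a binary term.
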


In the next result the terms from Proposition~\ref{prop1} (ii) are applied.

\begin{proposition}\label{prop2}
{\rm(\cite{CEL}}, Theorem~10.3.4{\rm)} For a permutable and weakly regular variety $\mathcal V$ the set consisting of $1$ and of the following ideal terms in the $y$'s is a basis of ideal terms:
\begin{enumerate}[{\rm(i)}]
\item $t_i\Big(f\big(t(x_1,x_1',y_1^1,\ldots,y_n^1),\ldots,t(x_m,x_m',y_1^m,\ldots,y_n^m)\big),f(x_1',\ldots,x_m')\Big)$ for each positive integer m, each $m$-ary fundamental operation of $\mathcal V$ and all $i=1,\ldots,n$,
\item $t(x,1,y_1,\ldots,y_n)$,
\item $t_1(y,1),\ldots,t_n(y,1)$.
\end{enumerate}
{\rm(}Here $t_1,\ldots,t_n,t$ denote the terms occurring in Proposition~\ref{prop1}.{\rm)}
\end{proposition}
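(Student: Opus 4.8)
The plan is to show that closure under the listed terms reconstructs a single congruence $\Theta_I$ whose $1$-class is exactly $I$, after which the already-cited coincidence of ideals and congruence kernels in permutable varieties finishes everything off. First I would dispose of the easy half. Each listed term is an ideal term: substituting $1$ for every $y$-variable, term (ii) becomes $t(x,1,\dots,1)=1$ by the third identity of Proposition~\ref{prop1}, term (iii) becomes $t_i(1,1)=1$ by the first, and in term (i) the inner arguments collapse through $t(x_k,x_k',1,\dots,1)=x_k'$ so that the whole term reduces to $t_i\big(f(x_1',\dots,x_m'),f(x_1',\dots,x_m')\big)=1$. Hence an ideal, being closed under every ideal term, is closed under all of these and contains $1$; this is the forward implication in the definition of a basis.

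For the converse, suppose $1\in I$ and that $I$ is closed under the terms (i)--(iii), and set
\[
\Theta_I:=\{(a,b)\in A^2 : t_i(a,b)\in I\text{ for all }i=1,\dots,n\}.
\]
Reflexivity follows at once from $t_i(x,x)\approx1$ and $1\in I$. Compatibility with an $m$-ary fundamental operation $f$ is exactly what the terms of type (i) are designed to deliver: given pairs $(a_k,b_k)\in\Theta_I$, I would substitute $x_k\mapsto a_k$, $x_k'\mapsto b_k$ and $y_j^k\mapsto t_j(a_k,b_k)\in I$, and use the reconstruction identity $t(x,y,t_1(x,y),\dots,t_n(x,y))\approx x$ to see that each inner block equals $a_k$; the term then evaluates to $t_i\big(f(a_1,\dots,a_m),f(b_1,\dots,b_m)\big)$, which lies in $I$ because $I$ is closed under term (i). Thus $\Theta_I$ is a reflexive compatible relation on $A$.

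Now permutability takes over. From the Mal'cev term $p$ of $\mathcal V$ one checks that every reflexive compatible relation is automatically an equivalence: for example $(a,b)\in\Theta_I$ forces $(p(b,a,a),p(b,b,a))=(b,a)\in\Theta_I$, giving symmetry, and an analogous triple yields transitivity, so $\Theta_I$ is a congruence. To identify its $1$-class, note that $a\,\Theta_I\,1$ means $t_i(a,1)\in I$ for all $i$; applying term (ii) with $x\mapsto a$ and $y_j\mapsto t_j(a,1)$ puts $t\big(a,1,t_1(a,1),\dots,t_n(a,1)\big)$ in $I$, and the reconstruction identity identifies this element with $a$, so $a\in I$. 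Conversely $a\in I$ gives $t_i(a,1)\in I$ by closure under the terms (iii), i.e.\ $a\,\Theta_I\,1$. Hence $[1]\Theta_I=I$, and since ideals and congruence kernels coincide in the permutable variety $\mathcal V$, $I$ is an ideal; this is the converse implication, completing the proof that the listed terms together with $1$ form a basis.

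The one step that will need genuine care is the passage from ``reflexive and compatible'' to ``congruence''. Nothing in the definition of $\Theta_I$ makes it visibly symmetric or transitive, and the listed terms (i)--(iii) do not by themselves force these properties; they are precisely what congruence permutability supplies, so the argument stands or falls on carrying out the Mal'cev manipulations on the correct triples of related pairs. Everything else — checking that the terms are ideal terms, reflexivity, $f$-compatibility, and the two inclusions computing $[1]\Theta_I$ — is essentially forced by the shapes of the terms in Proposition~\ref{prop1} and amounts to routine substitution.
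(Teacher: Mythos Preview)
The paper does not prove this proposition at all; it is quoted verbatim as Theorem~10.3.4 of \cite{CEL} and used as a black box, so there is no ``paper's own proof'' to compare against. Your sketch is a correct reconstruction of the standard argument behind that theorem: the forward direction is immediate from the identities in Proposition~\ref{prop1}, and for the converse the relation $\Theta_I=\{(a,b):t_i(a,b)\in I\text{ for all }i\}$ is reflexive by $t_i(x,x)\approx1$, compatible with each fundamental $f$ precisely via the terms of type~(i) together with the reconstruction identity, and then symmetry and transitivity come from applying the Mal'cev term $p$ componentwise to suitable triples of related pairs (e.g.\ $(a,a),(a,b),(b,b)$ for symmetry, $(a,b),(b,b),(b,c)$ for transitivity). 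The identification $[1]\Theta_I=I$ via terms~(ii) and~(iii) is exactly right, and the coincidence of ideals and congruence kernels in a permutable variety closes the loop. Your caution about the step ``reflexive compatible $\Rightarrow$ congruence'' is well placed, but the Mal'cev manipulations you indicate are the correct ones and go through without difficulty.
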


In the sequel we will need also the following two terms:
\begin{align*}
	t_1(x,y) & :=(x\wedge y)\vee(x\vee y)', \\
	t(x,y,z) & :=\Bigg(\bigg(\Big(\big((x\vee y)\wedge z\big)'\vee x\Big)\wedge(x\vee y)\bigg)'\vee x\Bigg)\wedge\Big(\big((x\vee y)\wedge z\big)'\vee y\Big)\wedge(x\vee y).
\end{align*}

\begin{lemma}\label{lem1}
The variety $\mathcal V$ satisfies the following identities:
\begin{enumerate}[{\rm(i)}]
\item $x\vee(x\wedge y)'\approx1$,
\item $y\vee(x\wedge y)'\approx1$,
\item $x\approx(x\vee y)\wedge\big(x\vee(x\vee y)'\big)$,
\item $y\approx(x\vee y)\wedge\big(y\vee(x\vee y)'\big)$,
\item $(x\vee y)\wedge t_1(x,y)\approx x\wedge y$,
\item $t_1(x,x)\approx1$,
\item $t\big(x,y,t_1(x,y)\big)\approx x$,
\item $t(x,y,1)\approx  y$.
\end{enumerate}
\end{lemma}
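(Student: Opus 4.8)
The plan is to dispatch the eight identities in three groups, arranging things so that the formidable-looking statements~(vii) and~(viii) reduce to routine substitutions once the first six are in hand.

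First I would record that (i), (ii) and (vi) follow immediately from the complementation laws $a\vee a'\approx1$, $a\wedge a'\approx0$ together with monotonicity. Since $x\wedge y\le x$ we get $x\vee(x\wedge y)'\ge(x\wedge y)\vee(x\wedge y)'\approx1$, which is (i); symmetrically $x\wedge y\le y$ gives (ii); and (vi) is just $t_1(x,x)=(x\wedge x)\vee(x\vee x)'=x\vee x'\approx1$. Next I would obtain (iii), (iv) and (v) as the three instances of the quasi-identity~(f) of Theorem~\ref{th1} (which holds throughout $\mathcal V$, since every member of $\mathcal V$ realizes the full adjoint pair, so conditions (d)--(f) all hold) applied to the comparabilities $x\le x\vee y$, $y\le x\vee y$ and $x\wedge y\le x\vee y$. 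Writing $w:=x\vee y$ and using (f) in the form ``$u\le w$ implies $u=(w'\vee u)\wedge w$'', the choice $u=x$ yields (iii), the choice $u=y$ yields (iv), and the choice $u=x\wedge y$ yields (v), the last because $(x\vee y)\wedge\big((x\wedge y)\vee(x\vee y)'\big)$ is precisely $(x\vee y)\wedge t_1(x,y)$.

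Finally, for (vii) and (viii) I would abbreviate $s:=(x\vee y)\wedge z$, so that
\[
t(x,y,z)=\Big(\big((s'\vee x)\wedge(x\vee y)\big)'\vee x\Big)\wedge(s'\vee y)\wedge(x\vee y),
\]
and feed in the two relevant values of $z$. For (vii), taking $z=t_1(x,y)$ makes $s=(x\vee y)\wedge t_1(x,y)\approx x\wedge y$ by (v); then (i) collapses $s'\vee x=x\vee(x\wedge y)'$ to $1$, so the outer left factor reduces to $x\vee(x\vee y)'$, while (ii) collapses the middle factor $s'\vee y=y\vee(x\wedge y)'$ to $1$, and (iii) finishes via $\big(x\vee(x\vee y)'\big)\wedge(x\vee y)\approx x$. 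For (viii), taking $z=1$ gives $s=x\vee y$; now (iii) reduces $(s'\vee x)\wedge(x\vee y)$ to $x$, so the outer left factor becomes $x'\vee x\approx1$ by complementation, and (iv) finishes via $\big(y\vee(x\vee y)'\big)\wedge(x\vee y)\approx y$.

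The only genuine bookkeeping lies in tracking the nested term $t$ through these last two substitutions, and the step I expect to demand the most care is recognizing that (v) is exactly the identity needed to simplify the inner subterm $s$ in (vii); once that simplification is made, identities (i)--(iv) together with complementation render every remaining cancellation forced.
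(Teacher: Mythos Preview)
Your proposal is correct and follows essentially the same route as the paper: (i), (ii), (vi) from complementation plus absorption/monotonicity; (iii)--(v) as instances of Theorem~\ref{th1}(f) applied to $x\le x\vee y$, $y\le x\vee y$, $x\wedge y\le x\vee y$; and (vii), (viii) by substituting into $t$ and collapsing via (i)--(v). The only cosmetic difference is your abbreviation $s:=(x\vee y)\wedge z$, which the paper omits but which changes nothing of substance.
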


\begin{proof}
\
\begin{enumerate}
\item[(i)] We have $x\vee(x\wedge y)'\approx x\vee(x\wedge y)\vee(x\wedge y)'\approx1$.
\item[(ii)] This follows from (i).
\item[(iii)] and (iv) follow from Theorem~\ref{th1} (f) since $x\le x\vee y$ and $y\le x\vee y$.
\item[(v)] According to Theorem~\ref{th1} (f) we have
\[
(x\vee y)\wedge t_1(x,y)\approx(x\vee y)\wedge\big((x\wedge y)\vee(x\vee y)'\big)\approx x\wedge y.
\]
\item[(vi)] We have $t_1(x,x)\approx(x\wedge x)\vee(x\vee x)'\approx x\vee x'\approx1$.
\item[(vii)] According to (i), (ii), (iii) and (v) we have
\begin{align*}
t\big(x,y,t_1(x,y)\big) & \approx\Bigg(\bigg(\Big(\big((x\vee y)\wedge t_1(x,y)\big)'\vee x\Big)\wedge(x\vee y)\bigg)'\vee x\Bigg)\wedge \\
& \hspace*{6mm}\wedge\Big(\big((x\vee y)\wedge t_1(x,y)\big)'\vee y\Big)\wedge(x\vee y)\approx \\
& \approx\bigg(\Big(\big((x\wedge y)'\vee x\big)\wedge(x\vee y)\Big)'\vee x\bigg)\wedge\big((x\wedge y)'\vee y\big)\wedge(x\vee y)\approx \\
& \approx\Big(\big(1\wedge(x\vee y)\big)'\vee x\Big)\wedge1\wedge(x\vee y)\approx\big((x\vee y)'\vee x\big)\wedge(x\vee y)\approx x.
\end{align*}
\item[(viii)] According to (iii) and (iv) we have
\begin{align*}
t(x,y,1) & \approx\Bigg(\bigg(\Big(\big((x\vee y)\wedge1\big)'\vee x\Big)\wedge(x\vee y)\bigg)'\vee x\Bigg)\wedge\Big(\big((x\vee y)\wedge1\big)'\vee y\Big)\wedge \\
& \hspace*{6mm}\wedge(x\vee y)\approx \\
& \approx\bigg(\Big(\big((x\vee y)'\vee x\big)\wedge(x\vee y)\Big)'\vee x\bigg)\wedge\big((x\vee y)'\vee y\big)\wedge(x\vee y)\approx \\
& \approx(x'\vee x)\wedge y\approx1\wedge y\approx y.
\end{align*}
\end{enumerate}
\end{proof}

From (vi), (vii) and (viii) of Lemma~\ref{lem1} it is evident that these terms satisfy (ii) of Proposition~\ref{prop1} for $n=1$. Hence, we can apply now Prposition~\ref{prop2} and we are ready to determine a finite basis of ideals terms of $\mathcal V$.

\begin{theorem}\label{th3}
The ideal terms
\begin{align*}
& t_1\big(t(x_1,x_2,y_1)\vee t(x_3,x_4,y_2),x_2\vee x_4\big), \\
& t_1\big(t(x_1,x_2,y_1)\wedge t(x_3,x_4,y_2),x_2\wedge x_4\big), \\
& t_1\Big(\big(t(x_1,x_2,y)\big)',x_2'\Big), \\
& 1
\end{align*}
form a finite basis of ideal terms of $\mathcal V$.
\end{theorem}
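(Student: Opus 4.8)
The plan is to apply Proposition~\ref{prop2} with $n=1$ to the concrete terms $t_1,t$ introduced just before Lemma~\ref{lem1}, and then to trim and rename the resulting list of ideal terms down to the four displayed in the statement. First I would check the hypotheses: by Theorem~\ref{th2} the variety $\mathcal V$ is congruence permutable and regular, hence weakly regular, while Lemma~\ref{lem1}(vi),(vii),(viii) shows that $t_1,t$ satisfy the three identities of Proposition~\ref{prop1}(ii) for $n=1$. Thus Proposition~\ref{prop2} applies and delivers a basis consisting of the constant $1$, one type-(i) term $t_1\big(f(t(x_1,x_1',y^1),\ldots),f(x_1',\ldots)\big)$ for each fundamental operation $f\in\{\vee,\wedge,{}',0,1\}$, the type-(ii) term $t(x,1,y)$, and the type-(iii) term $t_1(y,1)$.

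Next I would instantiate and simplify this raw basis. The two nullary operations yield $t_1(0,0)$ and $t_1(1,1)$, both $\approx1$ by Lemma~\ref{lem1}(vi), so they are absorbed into the constant term $1$. The type-(iii) term reduces to $t_1(y,1)\approx y$, so closure under it is automatic for every subset. The type-(i) terms for $\vee,\wedge,{}'$, after renaming each complement $x_i'$ occurring in them by an independent variable, become precisely the first three terms of the statement. I would record that each of the four listed terms really is an ideal term by setting the $y$'s equal to $1$: Lemma~\ref{lem1}(viii) collapses every $t(\cdot,\cdot,1)$ to its second argument, and then Lemma~\ref{lem1}(vi) gives $t_1(u,u)\approx1$.

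It remains to prove that closure under the four listed terms already forces closure under the \emph{entire} Proposition~\ref{prop2} basis; combined with the fact that each listed term is an ideal term (so every ideal is trivially closed under it), this gives the desired equivalence. For the type-(i) generators this is the easy direction of the ``decoupling'': specialising the free variable that replaced $x_i'$ back to $x_i'$ recovers the coupled Proposition~\ref{prop2} term, so closure under the listed, more general term implies closure under it, while the type-(iii) and nullary cases are already handled. The step I expect to be the main obstacle is eliminating the type-(ii) term $t(x,1,y)\approx(y'\vee x)'\vee x$, since it is not a syntactic instance of any listed term.

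Here I would exploit that $1$ lies in every ideal, so an ideal variable may legitimately be set to $1$. In the second (meet) term I would substitute $x_2=x_4=y_2=1$: by Lemma~\ref{lem1}(viii) the factor $t(x_3,x_4,y_2)$ collapses to $1$, the outer wrapper becomes the identity since $t_1(u,1)\approx u$ (as $1'\approx0$), and what survives is exactly $t(x_1,1,y_1)=(y_1'\vee x_1)'\vee x_1$. Hence closure under the meet term specialises to closure under the type-(ii) term, completing the reduction. Assembling these observations shows that closure under the four terms is equivalent to being an ideal, so they form a basis.
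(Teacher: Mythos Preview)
Your proposal is correct and follows the paper's approach: apply Proposition~\ref{prop2} (with $n=1$) to the terms $t_1,t$ validated by Lemma~\ref{lem1}, and reduce the resulting raw basis to the four displayed terms. The paper's own proof is a one-line citation of Proposition~\ref{prop2} and Lemma~\ref{lem1}; your write-up actually supplies the details the paper omits, in particular the verification that the decoupled terms are still ideal terms, that specialising $x_2\mapsto x_1'$ etc.\ recovers the coupled type-(i) generators, and the neat observation that the type-(ii) term $t(x,1,y)$ is a substitution instance of the meet term once $1\in I$ is used.
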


\begin{proof}
This follows from Proposition~\ref{prop2} and Lemma~\ref{lem1}.
\end{proof}

By Corollary~10.3.2 in \cite{CEL}, if $I$ is an ideal of an algebra $\mathbf A=(A,F)$ belonging to a permutable and weakly regular variety then $I$ is the kernel of the congruence
\[
\Theta_I:=\{(x,y)\in A^2\mid t_1(x,y),\ldots,t_n(x,y)\in I\}
\]
on $\mathbf A$ where $t_1,\ldots,t_n$ denote the terms occurring in Proposition~\ref{prop1}.

In our case, we know from Lemma~\ref{lem1} and Theorem~\ref{th3} that for the variety $\mathcal V$, we can take $n=1$ and $t_1(x,y)=(x\wedge y)\vee(x\vee y)'$. Hence, we immediately obtain

\begin{corollary}
If $\mathbf L$ be a member of the variety $\mathcal V$ and $I$ an ideal of $\mathbf L$ then $I$ is the kernel of the congruence
\[
\Theta_I=\{(x,y)\in L^2\mid(x\wedge y)\vee(x\vee y)'\in I\}
\]
on $\mathbf L$.
\end{corollary}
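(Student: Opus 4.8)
The plan is to obtain the corollary as a direct specialization of the general kernel description in Corollary~10.3.2 of \cite{CEL}, recalled immediately before the statement. That result says that in any permutable and weakly regular variety, for an ideal $I$ the relation $\Theta_I=\{(x,y)\mid t_1(x,y),\ldots,t_n(x,y)\in I\}$ is a congruence whose kernel is $I$, where $t_1,\ldots,t_n$ are the binary terms supplied by Proposition~\ref{prop1}(ii). Thus the argument splits into three tasks: confirming that $\mathcal V$ satisfies the hypotheses, exhibiting the concrete terms $t_i$, and carrying out the substitution.

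For the first task, Theorem~\ref{th2} tells us that $\mathcal V$ is arithmetical and regular, hence in particular congruence permutable and (being regular) weakly regular, which is exactly what Corollary~10.3.2 requires. For the second task, I would appeal to Lemma~\ref{lem1}: its parts (vi), (vii) and (viii) show that the single binary term $t_1(x,y)=(x\wedge y)\vee(x\vee y)'$, together with the auxiliary ternary term $t(x,y,z)$ displayed before that lemma, satisfies $t_1(x,x)\approx1$, $t\big(x,y,t_1(x,y)\big)\approx x$ and $t(x,y,1)\approx y$. These are precisely the three identities of Proposition~\ref{prop1}(ii) in the case $n=1$, so one may take $n=1$ with this $t_1$.

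For the third task, substituting $n=1$ into the general formula collapses the finite conjunction of membership conditions $t_1(x,y),\ldots,t_n(x,y)\in I$ to the single condition $(x\wedge y)\vee(x\vee y)'\in I$, giving
\[
\Theta_I=\{(x,y)\in L^2\mid(x\wedge y)\vee(x\vee y)'\in I\};
\]
Corollary~10.3.2 then guarantees that this is a congruence on $\mathbf L$ whose $1$-class is $I$, and since ideals coincide with congruence kernels in a permutable variety (as recalled in the text before Theorem~\ref{th3}), $I$ is the kernel of $\Theta_I$. The real content of the proof lies not in this purely formal substitution but in the availability of a \emph{single} binary term, i.e.\ in the admissibility of $n=1$; this rests entirely on Lemma~\ref{lem1}(vi)--(viii) and on the explicit construction of the companion term $t$. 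Once that is in hand, the collapse of the general conjunction to one membership condition, and hence the corollary itself, is immediate.
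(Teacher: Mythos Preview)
Your proposal is correct and follows essentially the same route as the paper: the paper derives the corollary directly from Corollary~10.3.2 of \cite{CEL}, noting that $\mathcal V$ is permutable and weakly regular (Theorem~\ref{th2}) and that by Lemma~\ref{lem1} one may take $n=1$ with $t_1(x,y)=(x\wedge y)\vee(x\vee y)'$. Your three-step breakdown (verify hypotheses, exhibit the terms, substitute) is exactly this argument, spelled out in slightly more detail.
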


\begin{example}\label{ex2}
Consider the lattice depicted in Fig.~2. For the complementation presented in Example~\ref{ex1}, there are only three ideals, namely $\{1\}$, $I:=\{d,f,g,h,1\}$ and $L:=\{0,a,b,c,d,e,f,g,h,1\}$. We have $\Theta_I=I^2\cup(L\setminus I)^2$. If, however, the complementation is given in another way by
\[
\begin{array}{l|cccccccccc}
x  & 0 & a & b & c & d & e & f & g & h & 1 \\
\hline
x' & 1 & g & h & f & e & d & b & c & a & 0
\end{array}
\]
then this lattice has four ideals, namely $\{1\}$, $\{e,1\}$, $I:=\{d,f,g,h,1\}$ and $L$. We have
\begin{align*}
\Theta_{\{e,1\}} & =\{e,1\}^2\cup\{a,f\}^2\cup\{b,g\}^2\cup\{c,h\}^2\cup\{0,d\}^2, \\
        \Theta_I & =I^2\cup(L\setminus I)^2.
\end{align*}        
\end{example}

\begin{remark}
The variety $\mathcal V$ is residually large, i.e.\ it has infinitely many finite subdirectly irreducible members. Namely, consider the lattice $\mathbf M_n=(M_n,\vee,\wedge)$ for $n\ge3$ visualized in Figure~3

\vspace*{-8mm}

\begin{center}
\setlength{\unitlength}{7mm}
\begin{picture}(6,6)
\put(3,1){\circle*{.3}}
\put(1,3){\circle*{.3}}
\put(5,3){\circle*{.3}}
\put(3,5){\circle*{.3}}
\put(2,3){\circle*{.3}}
\put(3,1){\line(-1,1)2}
\put(3,1){\line(-1,2)1}
\put(3,1){\line(1,1)2}
\put(3,5){\line(-1,-1)2}
\put(3,5){\line(1,-1)2}
\put(3,5){\line(-1,-2)1}
\put(2.85,.3){$0$}
\put(.2,2.85){$a_1$}
\put(2.35,2.85){$a_2\hspace*{3mm}\cdots$}
\put(5.35,2.85){$a_n$}
\put(2.85,5.4){$1$}
\put(2.2,-.75){{\rm Fig.~3}}
\put(.4,-1.75){{\rm Modular lattice $\mathbf M_n$}}
\end{picture}
\end{center}

\vspace*{10mm}

It is well-known that $\mathbf M_n$ is modular and simple and hence subdirectly irreducible. Let $\pi$ be a permutation of the set $\{a_1,\ldots,a_n\}	$ having no fixed points, i.e.\ satisfying $\pi(a_i)\ne a_i$ for all $i=1,\ldots,n$. Define $0':=1$, $1':=0$ and $a_i':=\pi(a_i)$ for all $i=1,\ldots,n$. Then $'$ is a complementation on $\mathbf M_n$ and $(M_n,\vee,\wedge,{}',0,1)$ is a finite subdirectly irreducible member of $\mathcal V$. However, these lattices are not the only finite subdirectly irreducible members of the variety $\mathcal V$. The lattice mentioned in the first case of Example~\ref{ex2} is not isomorphic to $\mathbf M_n$, but it is also a finite subdirectly irreducible member of $\mathcal V$ since it has only one non-trivial congruence which is obviously the only atom in the corresponding congruence lattice.
\end{remark}








Authors' addresses:

V\'aclav Cenker \\
Palack\'y University Olomouc \\
Faculty of Science \\
Department of Algebra and Geometry \\
17.\ listopadu 12 \\
771 46 Olomouc \\
Czech Republic \\
vaclav.cenker01@upol.cz

Ivan Chajda \\
Palack\'y University Olomouc \\
Faculty of Science \\
Department of Algebra and Geometry \\
17.\ listopadu 12 \\
771 46 Olomouc \\
Czech Republic \\
ivan.chajda@upol.cz

Helmut L\"anger \\
TU Wien \\
Faculty of Mathematics and Geoinformation \\
Institute of Discrete Mathematics and Geometry \\
Wiedner Hauptstra\ss e 8-10 \\
1040 Vienna \\
Austria, and \\
Palack\'y University Olomouc \\
Faculty of Science \\
Department of Algebra and Geometry \\
17.\ listopadu 12 \\
771 46 Olomouc \\
Czech Republic \\
helmut.laenger@tuwien.ac.at
\end{document}